\documentclass[12pt]{amsart}
\usepackage{amscd,amssymb,times, amsmath, fullpage}
\usepackage{enumitem}
\pagestyle{headings}

\setlength{\headheight}{6.15pt}
\setlength{\headsep}{0.5cm}

\newtheorem{theorem}{Theorem}
\newtheorem{lemma}[theorem]{Lemma}

\newtheorem{corollary}[theorem]{Corollary}
\theoremstyle{definition}

\theoremstyle{remark}
\newtheorem{remark}[theorem]{Remark}

\newcommand{\FF}{\mathcal{ F}}

\newcommand\numberthis{\addtocounter{equation}{1}\tag{\theequation}}

\def\bS{{\mathbb S}}

\def\a{\alpha}

\catcode`\@=\active 

\author{Gianluca Bande}
\address{Dipartimento di Matematica e Informatica, Universit{\`a} degli studi di Cagliari, Via Ospedale 72, 09124 Cagliari, Italy}
\email{gbande{\char'100}unica.it}
\author{David E. Blair} 
\address{Department of Mathematics, Michigan State University, East Lansing, MI 48824--1027, USA}
\email{blair{\char'100}math.msu.edu}
\author{Amine Hadjar}
\address{Laboratoire de Math{\'e}matiques, Informatique et
Applications, Universit{\'e} de Haute Alsace - 4, Rue des
Fr{\`e}res Lumi{\`e}re, 68093 Mulhouse C\'edex, France}
\email{mohamed.hadjar{\char'100}uha.fr}

\thanks{The first author is supported by P.R.I.N. 2010/11 -- Variet\`{a}  reali e complesse: geometria, topologia e analisi armonica -- Italy.}

\begin{document}

\title{Bochner and Conformal Flatness of Normal Metric Contact Pairs}

\date{\today; MSC 2010 classification: primary 53C55; secondary 53C25, 53D10, 53A30, 53C15, 53D15}
\keywords{Normal metric contact pairs, Bochner-flat, locally conformally flat, Vaisman manifolds}
\maketitle

\begin{abstract}
We prove that the normal metric contact pairs with orthogonal characteristic foliations, which are either Bochner-flat or locally conformally flat, are locally isometric to the Hopf manifolds. As a corollary we obtain the classification of locally conformally flat and Bochner-flat non-K\"ahler Vaisman manifolds.
\end{abstract}

\section{Introduction}

In 1949 S. Bochner \cite{Boch} introduced in K\"ahler geometry a new tensor field as a formal analogue of the Weyl conformal curvature tensor.  The Bochner tensor was extended from the K\"ahler manifold setting to general almost Hermitian manifolds by Tricerri and Vanhecke \cite{TV}.  While the Bochner tensor and Bochner-flatness have been studied  in 
K\"ahler geometry by a number of authors over the years, there have not been many applications to Hermitian manifolds that are not always K\"ahler.  In \cite{BM} V. Mart\'\i n-Molina and the second author showed that there are no conformally flat normal complex contact metric manifolds but that a Bochner-flat normal complex contact metric manifold must be
K\"ahler and locally isometric to ${\mathbb C}P^{2n+1}(4)$. It should be remarked that the notion of normality used is that due to Korkmaz \cite{K} and includes such non-K\"ahler normal complex contact metric manifolds as the complex Heisenberg group.

  A normal metric contact pair (with decomposable $\phi$) 
carries two complex structures and hence two Bochner tensors.  In the present paper we show that if either  Bochner tensor vanishes the manifold is locally isometric to the Hopf manifold $\bS^{2m+1}(1)\times \bS^1$.  As a corollary we will see that a conformally flat normal metric contact pair must also be locally isometric to a Hopf manifold. Moreover, as a bypass result, we recover the classification of locally conformally flat and Bochner-flat non-K\"ahler Vaisman manifolds.

\section{Preliminaries}

Contact pairs were introduced by G. D.  Ludden,  K. Yano and the second author in \cite{BLY} under the name
{\it bicontact} and by the first and third authors in \cite{B,BH} with the name {\it contact pair}.
A pair of 1-forms $(\a_1,\a_2)$ on a manifold $M$ is said to be a {\it contact pair of type $(m,n)$} if
\vskip4pt
\centerline{$\a_1\wedge(d\a_1)^m\wedge\a_2\wedge(d\a_2)^n$ is a volume form,}
\vskip4pt
\centerline{$(d\a_1)^{m+1}=0$ and $(d\a_2)^{n+1}=0$.}
\vskip4pt
\noindent
While it is possible to consider a contact pair of type (0,0), it seems most natural to require at least one of the forms to resemble a contact form, so that at least one of  $m$ or $n$ will be positive.

We can naturally associate to a contact pair two subbundles of the tangent bundle $TM$:
$$\{ X: \a_i(X)=0, d\a_i(X,Y)=0\; \forall \; Y\},\; i=1,2$$
These subbundles are integrable \cite{B, BH} and determine the {\it characteristic foliations} of $M$, denoted 
${\FF}_1$ and ${\FF}_2$ respectively.  The characteristic foliations are transverse and complementary and the leaves of ${\FF}_1$ and ${\FF}_2$ are contact manifolds of dimension $2n+1$ and $2m+1$ respectively, with contact forms induced by $\a_2$ and $\a_1$.  We also define the $(2m+2n)$-dimensional {\it horizontal subbundle} $\mathcal{H}$ to be the intersection of the kernels of $\a_1$ and $\a_2$.

The equations
\begin{eqnarray*}
&\alpha_1 (Z_1)=\alpha_2 (Z_2)=1  , \; \; \alpha_1 (Z_2)=\alpha_2
(Z_1)=0 , \\
&i_{Z_1} d\alpha_1 =i_{Z_1} d\alpha_2 =i_{Z_2}d\alpha_1=i_{Z_2}
d\alpha_2=0
\end{eqnarray*}
where $i_X$ is the contraction with the vector field $X$,  determine uniquely the two vector fields $Z_1$ and $Z_2$, called \emph{Reeb vector fields}. Since they commute, they give rise to a locally free $\mathbb{R}^2$-action, called  the \emph{Reeb action}. 

A {\it contact pair structure} \cite{BH1} on a manifold $M$ is a triple
$(\alpha_1 , \alpha_2 , \phi)$, where $(\alpha_1 , \alpha_2)$ is a
contact pair and $\phi$ a tensor field of type $(1,1)$ such that:
$$\phi^2=-Id + \alpha_1 \otimes Z_1 + \alpha_2 \otimes Z_2 , \quad
\phi Z_1=\phi Z_2=0.$$
The rank of $\phi$ is $\dim M-2$ and
 $\alpha_i \circ \phi =0$ for $i=1,2$. 
 
 The endomorphism $\phi$ is said to be {\it decomposable} if
$\phi (T\mathcal{F}_i) \subset T\mathcal{F}_i$, for $i=1,2$.
When $\phi$ is decomposable, $(\alpha_1 , Z_1 ,\phi)$ (respectively
$(\alpha_2 , Z_2 ,\phi)$) induces, on every leaf of $\mathcal{F}_2$ (respectively $\mathcal{F}_1$), a contact form and the restriction  $\phi_i$ of $\phi$ to the leaf forms an almost contact structure 
$(\alpha_i, Z_i,\phi_i)$.  It is important to note that there exists contact pair structures with decomposable 
$\phi$ which are not locally products \cite{BH2}.

In \cite{BH2} the first and third authors introduced two natural almost complex structures on the manifold $M$
by
$$J=\phi -\a_2\otimes Z_1+\a_1\otimes Z_2,\quad T=\phi +\a_2\otimes Z_1-\a_1\otimes Z_2.$$
The contact pair structure is said to be {\it normal} if both of these almost complex structures are integrable.  

On manifolds endowed with contact pair structures it is natural
to consider the following metrics \cite{BH1}.
Let $(\alpha_1 , \alpha_2 ,\phi )$ be a contact pair structure on
 $M$. A Riemannian metric $g$ on $M$ is said to be \emph{associated} if 
$$g(X, \phi Y)= (d \alpha_1 + d\alpha_2) (X,Y),$$
$$g(X, Z_i)=\alpha_i(X),\;i=1,2.$$ 
A {\it metric contact pair} on a manifold $M$ is a
four-tuple $(\alpha_1, \alpha_2, \phi, g)$ where $(\alpha_1,
\alpha_2, \phi)$ is a contact pair structure and $g$ an associated
metric with respect to it. Such a manifold will also be called a metric contact pair.
We note that on a normal metric contact pair, the Reeb vector fields are Killing.

For a metric contact pair,  $(\alpha_1, \alpha_2, \phi, g)$, the endomorphism field $\phi$ is decomposable
if and only if the characteristic foliations $\mathcal{F}_1 ,
\mathcal{F}_2$ are orthogonal. In this case $(\alpha_i, \phi, g)$ induces a contact metric structure $(\alpha_i, \phi_i, g)$ on the leaves of $\mathcal{F}_j$ , for $j\neq i$ .
Thus we assume the decomposability of $\phi$ throughout.
By the normality each $(\alpha_i, \phi_i, g)$ is a Sasakian structure on each leaf of the characteristic foliations.
Moreover the leaves are  minimal submanifolds \cite{BH3}.

\vspace{0.5cm}
In the course of our work we will need the following lemmas. Some formulas from \cite{BB, BH1} can be summarized as follows:
\begin{lemma}\label{Lemma-2.2}
On a normal metric contact pair with decomposable $\phi$, for every $X$
we have
$$\nabla_XZ_1=-\phi_1X,\quad \nabla_XZ_2=-\phi_2X.$$
\end{lemma}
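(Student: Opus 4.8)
The plan is to extract the formula from the single structural fact, recorded in the Preliminaries, that the Reeb field $Z_1$ is Killing; everything else is bookkeeping with the defining identities and decomposability. By interchanging the indices $1$ and $2$ it suffices to prove $\nabla_XZ_1=-\phi_1X$ for every $X$. Since $\alpha_1=g(Z_1,\cdot\,)$ and $Z_1$ is Killing, the endomorphism $X\mapsto\nabla_XZ_1$ is skew-adjoint for $g$ and the Killing equation gives, for all $X,Y$,
$$g(\nabla_XZ_1,Y)=d\alpha_1(X,Y)$$
(with the normalization of the exterior derivative used in \cite{BB,BH1}). Hence the lemma follows once we establish the pointwise algebraic identity $d\alpha_1(X,Y)=-g(\phi_1X,Y)$.

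To prove that, set $\mathcal{H}_i:=\mathcal{H}\cap T\mathcal{F}_i$. Because $\ker\alpha_i=Z_i^{\perp}$ (from $g(\cdot,Z_i)=\alpha_i$) and the characteristic foliations are orthogonal — which is exactly decomposability of $\phi$ — one gets the $g$-orthogonal splitting
$$TM=\mathbb{R}Z_1\oplus\mathbb{R}Z_2\oplus\mathcal{H}_1\oplus\mathcal{H}_2,\qquad T\mathcal{F}_1=\mathbb{R}Z_2\oplus\mathcal{H}_1,\quad T\mathcal{F}_2=\mathbb{R}Z_1\oplus\mathcal{H}_2 .$$
From $i_{Z_1}d\alpha_1=0$ (a Reeb equation) and the defining description of $\mathcal{F}_1$ (vectors $X$ with $\alpha_1(X)=0$ and $i_Xd\alpha_1=0$), the two-form $d\alpha_1$ vanishes whenever one of its arguments lies in $\mathbb{R}Z_1\oplus T\mathcal{F}_1$; thus $d\alpha_1(X,Y)=d\alpha_1(X_2,Y_2)$, where $X_2,Y_2$ denote the $\mathcal{H}_2$-components of $X,Y$. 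On $\mathcal{H}_2\subset T\mathcal{F}_2$ one likewise has $i_Xd\alpha_2=0$, so the defining relation $g(\,\cdot\,,\phi\,\cdot\,)=d\alpha_1+d\alpha_2$ reduces to $d\alpha_1(X_2,Y_2)=g(X_2,\phi Y_2)$. Finally $\phi$ is decomposable and annihilates $\alpha_1,\alpha_2$, so it maps $\mathcal{H}_2$ into itself and there agrees with $\phi_1$, while in general $\phi_1X=\phi X_2\in\mathcal{H}_2$; combining this with the orthogonality of the splitting and the skew-symmetry of $d\alpha_1$ gives
$$d\alpha_1(X,Y)=d\alpha_1(X_2,Y_2)=-d\alpha_1(Y_2,X_2)=-g(Y_2,\phi X_2)=-g(\phi_1X,Y).$$

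Putting the two steps together yields $g(\nabla_XZ_1,Y)=-g(\phi_1X,Y)$ for every $Y$, hence $\nabla_XZ_1=-\phi_1X$; the identity for $Z_2$ follows by the symmetric argument. I expect the only point requiring care to be the bookkeeping in the second step — verifying that the four summands of $TM$ are mutually $g$-orthogonal and that $d\alpha_1$, respectively $d\alpha_2$, is ``supported'' precisely on $\mathcal{H}_2$, respectively $\mathcal{H}_1$; this is exactly where decomposability enters. As a consistency check, restricting to a leaf the identity specializes to the classical Sasakian formula $\nabla\xi=-\phi$, the leaves being Sasakian by normality.
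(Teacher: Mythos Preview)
Your argument is sound. The paper itself does not prove this lemma: it is stated as a summary of formulas imported from \cite{BB,BH1}, so there is no in-paper proof to compare against. Your derivation is a correct self-contained replacement. The Killing step $g(\nabla_XZ_1,Y)=d\alpha_1(X,Y)$ holds under the half-factor convention for the exterior derivative (the one used in \cite{Blair,BB,BH1}); you flag this, which is appropriate, since with the other convention one picks up a spurious factor of $2$. The second step---showing $d\alpha_1$ is supported on $\mathcal{H}_2$ via $i_{Z_1}d\alpha_1=0$ and the definition of $\mathcal{F}_1$, then using $g(\cdot,\phi\cdot)=d\alpha_1+d\alpha_2$ together with $i_Xd\alpha_2=0$ on $\mathcal{H}_2$---is exactly the right bookkeeping, and your reading of $\phi_1$ as $\phi$ composed with projection onto $T\mathcal{F}_2$ (equivalently onto $\mathcal{H}_2$, since $\phi Z_1=0$) matches how the paper uses $\phi_1$ globally on $TM$ in the subsequent lemmas.
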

Using the previous lemma, we can restate Corollary $3.2$ of \cite{BBH} as follows:
\begin{lemma}\label {Lemma-2.1} On a normal metric contact pair with decomposable $\phi$, the covariant derivative of $\phi$ is given by
$$g((\nabla_X \phi)Y, V)=  \sum_{i=1} ^2 \bigl (d\alpha_i (\phi Y , X) \alpha_i (V)
- d\alpha_i (\phi V,X) \alpha_i (Y)\bigr )$$
and hence for the almost complex structure $J$ we have
\begin{align*}
g((\nabla_XJ)Y,V)=&\sum_{i=1} ^2 \bigl (d\alpha_i (\phi Y , X) \alpha_i (V)- d\alpha_i (\phi V,X) \alpha_i (Y)\bigr )\\
&-d\alpha_2(X,Y)\a_1(V)-d\alpha_1(X,V)\a_2(Y)\\
&+d\alpha_1(X,Y)\a_2(V)+d\alpha_2(X,V)\a_1(Y).
\end{align*}

\end{lemma}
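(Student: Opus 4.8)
The plan is to handle the two displayed identities in turn: the formula for $\nabla\phi$ is a mechanical rewriting of \cite[Corollary~3.2]{BBH} once Lemma~\ref{Lemma-2.2} is available, and the formula for $\nabla J$ will then follow from it by differentiating the definition $J=\phi-\alpha_2\otimes Z_1+\alpha_1\otimes Z_2$ and substituting Lemma~\ref{Lemma-2.2}.

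For the expression for $\nabla\phi$, I would start from \cite[Corollary~3.2]{BBH}, which gives $(\nabla_X\phi)Y$ on a normal metric contact pair in terms of the $\nabla_XZ_i$, the forms $\alpha_i$ and $\phi$; replacing $\nabla_XZ_i$ by $-\phi_iX$ via Lemma~\ref{Lemma-2.2}, and using that the Reeb fields are Killing on a normal metric contact pair (so that $\nabla\alpha_i$ is skew-symmetric and hence $(\nabla_X\alpha_i)(Y)=g(\nabla_XZ_i,Y)=d\alpha_i(X,Y)$), the right-hand side collapses to the stated compact form. Alternatively one can obtain the same identity intrinsically: by normality each leaf of $\mathcal{F}_j$ carries the Sasakian structure $(\alpha_i,\phi_i,g)$, so the leafwise Sasakian identity for $\nabla\phi_i$ is known, and the mixed terms between the two characteristic foliations are again governed by $\nabla_XZ_i=-\phi_iX$.

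For the second formula I would apply the Leibniz rule to $J=\phi-\alpha_2\otimes Z_1+\alpha_1\otimes Z_2$, obtaining
\[
(\nabla_XJ)Y=(\nabla_X\phi)Y-(\nabla_X\alpha_2)(Y)\,Z_1-\alpha_2(Y)\,\nabla_XZ_1+(\nabla_X\alpha_1)(Y)\,Z_2+\alpha_1(Y)\,\nabla_XZ_2 .
\]
Pairing with $V$ and using $\alpha_i=g(Z_i,\cdot)$, then substituting $\nabla_XZ_i=-\phi_iX$ from Lemma~\ref{Lemma-2.2} together with $(\nabla_X\alpha_i)(W)=d\alpha_i(X,W)$ and the skew-symmetry of $\phi$ (which, combined with the associated-metric relation split over the two summands, identifies $-g(\phi_iX,W)$ with $d\alpha_i(X,W)$), turns the four correction terms into exactly $-d\alpha_2(X,Y)\alpha_1(V)-d\alpha_1(X,V)\alpha_2(Y)+d\alpha_1(X,Y)\alpha_2(V)+d\alpha_2(X,V)\alpha_1(Y)$; adding these to the formula for $\nabla\phi$ gives the claim. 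As an independent check one may run the standard formula for $\nabla J$ on an integrable almost-Hermitian manifold, namely $2g((\nabla_XJ)Y,V)=d\Omega(X,JY,JV)-d\Omega(X,Y,V)$ with $\Omega=g(\cdot,J\cdot)$, noting that here $\Omega$ is a linear combination of $d\alpha_1$, $d\alpha_2$ and $\alpha_1\wedge\alpha_2$, so that $d\Omega$ is built from $\alpha_1\wedge d\alpha_2$ and $d\alpha_1\wedge\alpha_2$, and the contraction reproduces the same four extra terms.

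The only genuinely delicate point is sign bookkeeping, and it is concentrated in the identity $(\nabla_X\alpha_i)(Y)=d\alpha_i(X,Y)$: it uses both that $Z_i$ is Killing (so $\nabla\alpha_i$ is skew and equals $d\alpha_i$ in the normalization $d\alpha(X,Y)=\tfrac12(X\alpha(Y)-Y\alpha(X)-\alpha([X,Y]))$ in force here) and the fact that the decomposable $\phi$ splits as $\phi_1+\phi_2$ with $\phi_i$ individually metrically adjoint to $d\alpha_i$ on the horizontal bundle $\mathcal{H}$. Once these conventions are pinned down, everything else is a routine contraction.
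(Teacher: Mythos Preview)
Your proposal is correct and matches the paper's approach exactly: the paper does not give a separate proof of this lemma but simply introduces it as a restatement of \cite[Corollary~3.2]{BBH} using Lemma~\ref{Lemma-2.2}, and the ``and hence'' for the $\nabla J$ formula is precisely the Leibniz-rule computation you describe. Your write-up in fact supplies more detail than the paper itself.
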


Our conventions for the curvature tensor of a Riemannian manifold are
\begin{align*}
R(X,Y)V&=\nabla_X\nabla_YV-\nabla_Y\nabla_XV-\nabla_{[X,Y]}V,\\
R(X,Y,V,W)&=g(R(X,Y)V,W).
\end{align*}

Let $Z=Z_1+Z_2$; from Theorem $2$ of \cite{BH4} or the proof of Lemma $2.1$ of \cite{BB} one readily has the following lemma.

\begin{lemma}\label{Lemma-2.3} On a normal metric contact pair with decomposable $\phi$, 
\begin{align*}
g(R_{XY}Z,V)=&d\a_1(\phi V,X)\a_1(Y) +d\a_2(\phi V,X)\a_2(Y)\\
&-d\a_1(\phi V,Y)\a_1(X)-d\a_2(\phi V,Y)\a_2(X).
\end{align*}

\end{lemma}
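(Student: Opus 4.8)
The plan is to compute $g(R_{XY}Z,V)$ directly from the Levi-Civita connection, reduce the curvature term to the covariant derivative of $\phi$, and then invoke Lemma~\ref{Lemma-2.1}. First, since $\phi$ is decomposable one has $\phi X=\phi_1X+\phi_2X$ for every $X$ (each $\phi_i$ extended by zero on the complementary characteristic distribution), so Lemma~\ref{Lemma-2.2} gives
$$\nabla_XZ=\nabla_XZ_1+\nabla_XZ_2=-\phi_1X-\phi_2X=-\phi X.$$
Substituting this into $R(X,Y)Z=\nabla_X\nabla_YZ-\nabla_Y\nabla_XZ-\nabla_{[X,Y]}Z$ and expanding $\nabla_X(\phi Y)=(\nabla_X\phi)Y+\phi(\nabla_XY)$, the terms in which $\phi$ is composed with a covariant derivative cancel against $\phi[X,Y]$ by the torsion-freeness of $\nabla$, leaving the clean identity $R(X,Y)Z=(\nabla_Y\phi)X-(\nabla_X\phi)Y$.

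Next I would pair this with $V$ and insert the formula for $g((\nabla_X\phi)Y,V)$ from Lemma~\ref{Lemma-2.1}, applied once as stated and once with $X$ and $Y$ interchanged. Regrouping, the coefficient of $\alpha_i(V)$ comes out as $d\alpha_i(\phi X,Y)-d\alpha_i(\phi Y,X)$, while the terms carrying no $\alpha_i(V)$ factor add up to exactly
$$\sum_{i=1}^2\bigl(d\alpha_i(\phi V,X)\alpha_i(Y)-d\alpha_i(\phi V,Y)\alpha_i(X)\bigr),$$
which is the asserted expression. Thus the statement reduces to showing that the bilinear form $(X,Y)\mapsto d\alpha_i(\phi X,Y)$ is symmetric for $i=1,2$.

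This symmetry is the one point I expect to require a genuine (if short) argument. It follows from the structure of a decomposable metric contact pair: $d\alpha_i$ annihilates $T\mathcal{F}_i$ and $\phi$ preserves $T\mathcal{F}_j$, so $d\alpha_i(\phi X,Y)$ depends only on the components of $X$ and $Y$ tangent to a leaf of $\mathcal{F}_j$ (with $j\neq i$), where it equals $d\alpha_i(\phi_iX,Y)$; on such a leaf $(\alpha_i,Z_i,\phi_i,g)$ is a contact metric (indeed Sasakian) structure, for which $d\alpha_i(\phi_iX,Y)=g(\phi_iX,\phi_iY)$, and this is manifestly symmetric in $X$ and $Y$. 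Hence the $\alpha_i(V)$-terms drop out and the formula is established. (Alternatively, one could exploit the fact that $Z=Z_1+Z_2$ is Killing and use the corresponding second-order identity for $\nabla\nabla Z$, but this still routes back through Lemma~\ref{Lemma-2.1} and does not seem shorter.)
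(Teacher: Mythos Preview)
Your argument is correct. The paper does not actually supply a proof here: it simply records that the formula follows from Theorem~2 of \cite{BH4} or from the proof of Lemma~2.1 of \cite{BB}. Your route---using Lemma~\ref{Lemma-2.2} to get $\nabla_XZ=-\phi X$, reducing $R(X,Y)Z$ to $(\nabla_Y\phi)X-(\nabla_X\phi)Y$ via torsion-freeness, and then reading off the answer from Lemma~\ref{Lemma-2.1}---gives a clean, self-contained derivation from material already in the paper. The one substantive point you isolate, the symmetry of $(X,Y)\mapsto d\alpha_i(\phi X,Y)$, is handled exactly right: decomposability plus the characteristic-foliation definition localize the question to a leaf, where the associated-metric identity $d\alpha_i(\phi_i X,Y)=g(\phi_i X,\phi_i Y)$ makes the symmetry manifest. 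The Killing-field alternative you mention is closer in spirit to what the cited references use, but, as you say, it is no shorter.
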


\begin{lemma}\label{Lemma-2.4}  On a normal metric contact pair with decomposable $\phi$ let $X$ be a local unit vector field in $T{\FF}_2\cap{\mathcal{H}}$. Then
$$R(X,Z_1,Z_1,X)=1,\quad R(X,Z_1,Z_2,X)=0,\quad R(X,Z_2,Z_2,X)=0.$$
\end{lemma}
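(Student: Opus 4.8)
The plan is to compute the three curvature expressions directly from the structural equations gathered in the preliminary lemmas, using the fact that $X$ is a unit vector field lying in $T\FF_2\cap\HHH$, so that $\alpha_1(X)=\alpha_2(X)=0$ and, by decomposability, $\phi_1 X = 0$ while $\phi_2 X = \phi X$ is again a unit field in $T\FF_2\cap\HHH$. First I would apply Lemma~\ref{Lemma-2.2}, which gives $\nabla_X Z_1 = -\phi_1 X = 0$ and $\nabla_X Z_2 = -\phi_2 X = -\phi X$. More generally I will need $\nabla_Y Z_i = -\phi_i Y$ for arbitrary $Y$ in order to differentiate once more. The strategy for each term is to write, say, $R(X,Z_1)Z_1 = \nabla_X\nabla_{Z_1}Z_1 - \nabla_{Z_1}\nabla_X Z_1 - \nabla_{[X,Z_1]}Z_1$, then substitute $\nabla_W Z_1 = -\phi_1 W$ into each piece and use $[X,Z_1] = \nabla_X Z_1 - \nabla_{Z_1}X$ together with the fact that $\nabla_X Z_1 = 0$ here.

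Carrying this out: since the Reeb fields are Killing and $i_{Z_i}d\alpha_j = 0$, one has $\nabla_{Z_1}Z_1 = 0$, so the first term drops. For the middle term $\nabla_{Z_1}\nabla_X Z_1 = \nabla_{Z_1}(-\phi_1 X) = 0$ because $\phi_1 X=0$ and $\phi_1$ is parallel along $Z_1$ in the relevant sense — more carefully I would use Lemma~\ref{Lemma-2.1} to control $(\nabla_{Z_1}\phi)X$, noting all the $d\alpha_i$ terms there involve $\phi X \in T\FF_2$ paired against $\alpha_i$, and evaluate. For the bracket term, $[X,Z_1] = -\nabla_{Z_1}X$, and since $Z_1$ is Killing $\nabla_{Z_1}X = \nabla_X Z_1 + [Z_1,X]$... rather, the cleanest route is: $R(X,Z_1)Z_1 = -\nabla_X(\phi_1 X) + \nabla_{Z_1}(\phi_1 X) + \phi_1([X,Z_1])$ after substituting, then pair with $X$ and use $g(\phi_1 X, X)=0$ to kill the cross terms, leaving $R(X,Z_1,Z_1,X) = -g(\nabla_X(\phi_1 X),X) = g(\phi_1 X,\nabla_X X)$... this needs care. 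A more robust alternative, which I expect to be the actual mechanism, is to invoke Lemma~\ref{Lemma-2.1} on $(\nabla_X\phi)X$: since $g((\nabla_X\phi_1)X, V)$ involves $d\alpha_i(\phi X,X)\alpha_i(V)$, and $d\alpha_1(\phi X, X) = g(\phi\phi_1 X, X)=$ (something computable since the leaves are Sasakian), one extracts exactly the value $1$.

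For the mixed term, $R(X,Z_1,Z_2,X) = g(R(X,Z_1)Z_2,X)$: using $\nabla_W Z_2 = -\phi_2 W$ throughout, $R(X,Z_1)Z_2 = -\nabla_X\phi_2 Z_1 + \nabla_{Z_1}\phi_2 X + \phi_2[X,Z_1]$. Since $\phi_2 Z_1 = 0$ (as $Z_1 \in T\FF_1$ and $\phi_2$ kills the $\FF_1$ directions), $\phi_2 X = \phi X$, and $[X,Z_1] \in T\FF_2$, every surviving term pairs against $X$ to give zero once we use that $\phi X \perp X$ and that the $Z_1$-derivative of the Sasakian structure on the $\FF_2$-leaves behaves as on a standard Sasakian manifold. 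Similarly $R(X,Z_2,Z_2,X) = 0$ is just the statement that in the Sasakian structure $(\alpha_2,\phi_2,g)$ on a leaf of $\FF_1$, the sectional-curvature-type term $R(X,Z_2,Z_2,X)$ equals $g(X,X)=1$ on the leaf, but here $Z_2$ normal to $T\FF_2\cap\HHH$... wait — actually for a Sasakian manifold $R(X,\xi)\xi = X$ for $X$ orthogonal to $\xi$, so one expects $R(X,Z_2,Z_2,X)=1$ on the leaf; the claim that it is $0$ in the ambient manifold reflects the O'Neill/second-fundamental-form correction since the $\FF_1$-leaves sit inside $M$ — so the honest approach is to go through Lemma~\ref{Lemma-2.1} directly in $M$ rather than through the induced leaf structures.

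The main obstacle, then, is bookkeeping the difference between intrinsic leaf curvature and ambient curvature: the naive Sasakian identities give $1$ for both $R(X,Z_1,Z_1,X)$ and $R(X,Z_2,Z_2,X)$, but $X\in T\FF_2$ is "intrinsic" to the $\FF_2$-leaf and only "transverse" for the $\FF_1$-leaf, which is why the second vanishes. I would resolve this not by comparing leaf and ambient connections but by a single clean computation in $M$: substitute Lemma~\ref{Lemma-2.2} into the definition of $R$, reduce all remaining covariant derivatives of $\phi_1,\phi_2$ via Lemma~\ref{Lemma-2.1}, and carefully track which $\alpha_i$-components survive given $X\in T\FF_2\cap\HHH$. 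The three stated values should then fall out as the coefficients of the surviving $d\alpha_i(\phi\,\cdot\,,\,\cdot\,)$ terms evaluated on $X$.
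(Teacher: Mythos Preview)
Your overall strategy --- substitute $\nabla_{\,\cdot\,}Z_i=-\phi_i(\,\cdot\,)$ from Lemma~\ref{Lemma-2.2} into the curvature formula and reduce via Lemma~\ref{Lemma-2.1} --- is exactly what the paper does. But you have the indices on $\phi_1,\phi_2$ reversed, and this is not cosmetic: it drives the whole computation. By definition $(\alpha_1,Z_1,\phi_1)$ is the induced almost contact structure on the leaves of $\FF_2$, so for $X\in T\FF_2\cap\HHH$ one has $\phi_1 X=\phi X\neq 0$ and $\phi_2 X=0$, the opposite of what you wrote. With the correct assignment, $\nabla_X Z_1=-\phi_1 X$ is nonzero and the paper's three-line calculation gives
\[
R(X,Z_1,Z_1,X)=g\bigl((\nabla_{Z_1}\phi_1)X+\phi_1(\nabla_X Z_1),\,X\bigr)=g(-\phi_1^2X,X)=1,
\]
while $\nabla_X Z_2=-\phi_2 X=0$ together with $[X,Z_1]\in T\FF_2$ (so $\phi_2[X,Z_1]=0$) kills the mixed term, and the Killing property of $Z_2$ plus $\phi_2 X=0$ gives the third.

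Your suspicion that ``the Sasakian identity gives $R(X,Z_2,Z_2,X)=1$ on the leaf and an O'Neill correction brings it to $0$ in $M$'' is a misdiagnosis caused by the same index swap. The Sasakian structure with Reeb field $Z_2$ lives on the leaves of $\FF_1$, and $X$ is \emph{not tangent} to those leaves; there is no intrinsic Sasakian computation to correct. The vanishing is simply the algebraic fact $\phi_2 X=0$. Once you fix the labeling of $\phi_1,\phi_2$, your proposed bookkeeping collapses to the paper's short direct computation and no extrinsic-versus-intrinsic comparison is needed.
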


\begin{proof} Using Lemmas \ref{Lemma-2.2} and \ref{Lemma-2.1}, we have
\begin{align*}
R(X,Z_1,Z_1,X)=&g(-\nabla_{Z_1}\nabla_XZ_1-\nabla_{[X,Z_1]}Z_1,X)\\
=&g(\nabla_{Z_1}\phi_1X+\phi_1[X,Z_1],X)\\
=&g((\nabla_{Z_1}\phi_1)X+\phi_1(\nabla_XZ_1),X)\\
=&g(-\phi_1^2X,X)\\
=&1.
\end{align*}
Similarly
$$R(X,Z_1,Z_2,X)=g(-\nabla_{Z_1}\nabla_XZ_2-\nabla_{[X,Z_1]}Z_2,X)=0.$$
Finally, since $Z_2$ is Killing,
\begin{align*}
R(X,Z_2,Z_2,X)=&g(-\nabla_{[X,Z_2]}Z_2,X)\\=&g(\nabla_{\nabla_{Z_2}X}Z_2,X)\\
=&-g(\nabla_XZ_2,\nabla_{Z_2}X)\\
=&0.
\end{align*}
\end{proof}
\vskip6pt

We denote by $\rho(R)$ or $\rho(X,Y)$ or simply $\rho$ the Ricci tensor of $R$ and by $\tau$ the scalar curvature. For an almost Hermitian manifold with almost complex structure $J$,  we recall the
 {\it $*$-Ricci tensor} defined by $\rho^*(X,Y)=\sum_i R(X,e_i,Je_i,JY)$ where  $\{e_i\}$ is an arbitrary 
 orthonormal basis.  In general $\rho^*$ is not symmetric but it does satisfy 
 $\rho^*(X,Y)=\rho^*(JY,JX)$. The trace of $\rho^*$, denoted  $\tau^*$,  is called the {\it $*$-scalar curvature}.
\vskip6pt
A normal metric contact pair carries two Hermitian structures $(g, J)$ and $(g, T)$, with respect to the same metric $g$, giving rise to a pair of $*$-Ricci tensors. In the sequel, $\rho^*$ will denote the $*$-Ricci tensor of $(g, J)$.

\begin{lemma}\label{Lemma-2.5} 
On a normal metric contact pair with decomposable $\phi$,
\begin{align*}
\rho^*(X,Y)=&\rho(X,Y)-(2m-1)g(\phi_1X,\phi_1Y)-(2n-1)g(\phi_2X,\phi_2Y)\\
&-2m\a_1(X)\a_1(Y)-2n\a_2(X)\a_2(Y).
\end{align*}
Moreover $\rho^*$ is symmetric and $J$-invariant and we have
$$\tau-\tau^*=4(m^2+n^2).$$
\end{lemma}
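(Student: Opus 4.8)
The plan is to isolate $\rho^*-\rho$ as a ``non-Kähler'' correction built from $\nabla J$ and then evaluate that correction explicitly using Lemma~\ref{Lemma-2.1}. Fix $p\in M$ and an orthonormal frame $\{e_i\}$. Writing $R(X,e_i)(Je_i)=J\bigl(R(X,e_i)e_i\bigr)+\bigl(R(X,e_i)\cdot J\bigr)(e_i)$, with $(R(X,Z)\cdot J)W:=R(X,Z)(JW)-J(R(X,Z)W)$, and using that $J$ is a $g$-isometry, one gets
\[
\rho^*(X,Y)=\sum_i g\bigl(R(X,e_i)Je_i,\,JY\bigr)=\rho(X,Y)+\sum_i g\bigl((R(X,e_i)\cdot J)e_i,\,JY\bigr).
\]
By the Ricci identity $R(X,Z)\cdot J=\nabla^2_{X,Z}J-\nabla^2_{Z,X}J$, so the correction is a contraction of $\nabla^2J$. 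Differentiating the formula of Lemma~\ref{Lemma-2.1} once more—using Lemma~\ref{Lemma-2.2} (so $\nabla_XZ_i=-\phi_iX$, and $d\alpha_i$ is expressed through $\phi_i$ and $g$), the $\nabla\phi$ formula of Lemma~\ref{Lemma-2.1}, and the first derivatives $\nabla\phi_i$ coming from the induced Sasakian structures on the leaves—makes $\nabla^2J$ completely explicit; since no curvature enters $\nabla^2J$, this is a genuine reduction of $\rho^*$ to $\rho$ plus structural data, not a tautology.

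Substituting into $\sum_i g\bigl((\nabla^2_{X,e_i}J-\nabla^2_{e_i,X}J)e_i,\,JY\bigr)$, the frame sums collapse through the standard identities $\sum_i\alpha_j(e_i)F(\dots,e_i)=F(\dots,Z_j)$, $\sum_i d\alpha_j(e_i,\cdot)\alpha_k(e_i)=d\alpha_j(Z_k,\cdot)=0$ (because $i_{Z_k}d\alpha_j=0$), $\sum_i d\alpha_j(e_i,e_i)=0$, and $\sum_i g(\phi_j\phi\,e_i,e_i)=\tr(\phi_j^2)$, which equals $-2m$ for $j=1$ and $-2n$ for $j=2$ since decomposability gives $\phi_1\phi_2=\phi_2\phi_1=0$. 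After collecting, the correction is exactly
\[
-(2m-1)g(\phi_1X,\phi_1Y)-(2n-1)g(\phi_2X,\phi_2Y)-2m\,\alpha_1(X)\alpha_1(Y)-2n\,\alpha_2(X)\alpha_2(Y),
\]
which is the asserted formula. As a partial check, Lemma~\ref{Lemma-2.3} gives $g(R(X,\phi Y)Z_1,Z_2)=0$, so the $Z_1,Z_2$-contributions to $\rho^*(X,Y)$ vanish for horizontal $X,Y$; and $\rho(Z_1,Z_1)=2m$ by Lemma~\ref{Lemma-2.4}, matching $\rho^*(Z_1,Z_1)=0$ against the $-2m\,\alpha_1\otimes\alpha_1$ term.

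The remaining assertions follow formally. Symmetry of $\rho^*$ is immediate from the proven formula, since $\rho$, $g(\phi_iX,\phi_iY)$ and $\alpha_i(X)\alpha_i(Y)$ are symmetric. Combined with the general almost-Hermitian identity $\rho^*(A,B)=\rho^*(JB,JA)$ recalled in the Preliminaries, symmetry gives $\rho^*(JX,JY)=\rho^*\bigl(J(JY),J(JX)\bigr)=\rho^*(-Y,-X)=\rho^*(X,Y)$, i.e.\ $J$-invariance. Finally, tracing the formula and using $\sum_i g(\phi_je_i,\phi_je_i)=-\tr(\phi_j^2)$ ($=2m$ for $j=1$, $=2n$ for $j=2$) and $\sum_i\alpha_j(e_i)^2=|Z_j|^2=1$ gives $\tau^*=\tau-(2m-1)(2m)-(2n-1)(2n)-2m-2n=\tau-4(m^2+n^2)$.

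The heavy lifting—and the main obstacle—is the second-derivative computation: differentiating the multi-term expression of Lemma~\ref{Lemma-2.1}, forming the commutator $\nabla^2_{X,Z}J-\nabla^2_{Z,X}J$, and performing the frame contraction. The delicate points are the cross-terms mixing the two characteristic foliations and the terms in which $\nabla\phi$ (rather than $\nabla\phi_1,\nabla\phi_2$) appears; one has to verify that these reorganize cleanly into the $\phi_1$-, $\phi_2$- and $\alpha_i$-pieces, with no residual curvature- or $d\alpha$-terms surviving.
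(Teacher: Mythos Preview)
Your approach is essentially the paper's: both express $\rho-\rho^*$ through the commutator of second covariant derivatives of $J$ (the Ricci identity on your side, Yano's index formula $(\rho_{jt}-\rho^*_{jt})J^t{}_i=\nabla_t\nabla_jJ^t{}_i-\nabla_j\nabla_tJ^t{}_i$ on the paper's side), evaluate those derivatives using Lemmas~\ref{Lemma-2.2} and~\ref{Lemma-2.1}, and then read off symmetry, $J$-invariance, and the trace identity exactly as you do. The one organizational difference is that the paper first contracts and then differentiates: it computes the divergence $\nabla_tJ^t{}_i=-2m(\alpha_1)_i-2n(\alpha_2)_i$ directly from Lemma~\ref{Lemma-2.1}, so that $\nabla_j\nabla_tJ^t{}_i$ is just the covariant derivative of a $1$-form, and only the term $\nabla_t\nabla_jJ^t{}_i$ requires a genuine second-order computation; this sidesteps the ``full $\nabla^2J$ then contract'' step you flag as the heavy lifting and avoids the cross-terms you worry about. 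Your route is correct but does more work than necessary.
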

\begin{proof}
On an almost Hermitian manifold we have the following relation between the Ricci tensor and $*$-Ricci tensor, \cite[p. $195$]{Y}
$$(\rho_{jt}-\rho^*_{jt})J^t{}_i=\nabla_t\nabla_jJ^t{}_i-\nabla_j\nabla_tJ^t{}_i.$$
From Lemmas \ref{Lemma-2.2} and \ref{Lemma-2.1} we have
$$\nabla_tJ^t{}_i=-2m(\a_1)_i-2n(\a_2)_i$$
and differentiating
$$\nabla_j\nabla_tJ^t{}_i=-2m(\phi_1)_{ji}-2n(\phi_2)_{ji}.$$
In like manner, differentiation of the formula of Lemma \ref{Lemma-2.1} also yields
$$\nabla_t\nabla_jJ^t{}_i=-\phi_{ji}-2m(\a_2)_i(\a_1)_j+2n(\a_1)_i(\a_2)_j.$$
Using these we have
$$(\rho_{jt}-\rho^*_{jt})J^t{}_i=
(2m-1)(\phi_1)_{ji}+(2n-1)(\phi_2)_{ji}-2m(\a_2)_i(\a_1)_j+2n(\a_1)_i(\a_2)_j.$$
Or in an invariant language
\begin{align*}
 \rho^*(Y,JX)-\rho(Y,JX)=&-(2m-1)d\a_1(Y,X)-(2n-1)d\a_2(Y,X)\\
&+2m\a_1(Y)\a_2(X)-2n\a_1(X)\a_2(Y).
\end{align*}
Replacing $X$ by $JX$ we have
\begin{align*}
\rho(Y,X)-\rho^*(Y,X)=&(2m-1)g(\phi_1Y,\phi_1X)+(2n-1)g(\phi_2Y,\phi_2X)\\
&+2m\a_1(X)\a_1(Y)+2n\a_2(X)\a_2(Y)
\end{align*}
as desired. By the previous relation and the symmetry of $\rho$ we get the symmetry of $\rho^*$. Next, by the fact that $\rho^*(X,Y)=\rho^*(JY,JX)$, we obtain the $J$-invariance of $\rho^*$.
Contracting in the same relation, we have for the scalar curvature
$\tau-\tau^*=4(m^2+n^2)$. 
\end{proof}

\vskip6pt
Now consider a local orthonormal basis $\{E_1,\dots,E_m,E_{m+1},\dots,E_{m+n},Z_1,Z_2\}$ where the first $m$ vector fields are tangent to the leaves of ${\FF}_2$ and the next $n$ tangent to the leaves of 
${\FF}_1$.
From Lemma \ref{Lemma-2.3} we have the following.
$$\rho(Z_1,Z_1+Z_2)=\sum_{i=1}^m d\a_1(\phi E_i,E_i)=2m.$$
Similarly $\rho(Z_2,Z_1+Z_2)=2n$.  For simplicity we abbreviate $\rho(Z_i,Z_j)$ by $\rho_{ij}$ and the same for $\rho^*$.  We then have the following lemma.

\begin{lemma}\label{Lemma-2.6}
On a normal metric contact pair with decomposable $\phi$, $\rho$ is $J$-invariant on horizontal vectors,
i.e. $\rho(JX,JY)=\rho(X,Y)$, and
$$\rho_{11}=2m,\quad\rho_{22}=2n,\quad\rho_{12}=0,\quad
\rho^*_{11}=\rho^*_{22}=\rho^*_{12}=0.$$
\end{lemma}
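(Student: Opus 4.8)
The plan is to establish Lemma \ref{Lemma-2.6} by combining Lemmas \ref{Lemma-2.3}, \ref{Lemma-2.4}, and \ref{Lemma-2.5} with the identity $\tau-\tau^*=4(m^2+n^2)$ already recorded. The $J$-invariance of $\rho$ on horizontal vectors is the most substantive part, and I would derive it from the formula of Lemma \ref{Lemma-2.5}: for horizontal $X,Y$ we have $\a_i(X)=\a_i(Y)=0$ and $g(\phi_iX,\phi_iY)=g(X,Y)$ when both $X,Y$ lie in $T\FF_j\cap\HHH$ appropriately, so the correction terms either vanish or are themselves manifestly $J$-invariant; since $\rho^*$ is $J$-invariant (also from Lemma \ref{Lemma-2.5}), so is $\rho$ on $\HHH$.

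For the values $\rho_{11}$, $\rho_{22}$, $\rho_{12}$, I would just quote the two computations displayed immediately before the lemma statement, namely $\rho(Z_1,Z)=2m$ and $\rho(Z_2,Z)=2n$ obtained by tracing Lemma \ref{Lemma-2.3}. To split these into the individual components, I would use Lemma \ref{Lemma-2.4}: summing $R(E_k,Z_1,Z_1,E_k)$ over the horizontal frame gives the contribution of horizontal directions to $\rho_{11}$, with $R(Z_2,Z_1,Z_1,Z_2)$ handled by Lemma \ref{Lemma-2.3} or directly. Carrying this out shows $\rho_{11}=2m$, $\rho_{22}=2n$ on the nose and $\rho_{12}=0$; concretely, $\rho(Z_1,Z_2)=\sum_k R(E_k,Z_2,Z_1,E_k)+R(Z_1,Z_2,Z_1,Z_1)=0$ using the vanishing relations of Lemma \ref{Lemma-2.4} and the antisymmetry $R(Z_1,Z_2,Z_1,Z_1)=0$.

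Once the $\rho_{ij}$ are known, the starred quantities follow immediately by substituting $X=Y=Z_i$ into the formula of Lemma \ref{Lemma-2.5}: there $g(\phi_1Z_i,\phi_1Z_i)=0$ and $g(\phi_2Z_i,\phi_2Z_i)=0$ since $\phi Z_i=0$, while $\a_1(Z_i)\a_1(Z_i)$ and $\a_2(Z_i)\a_2(Z_i)$ contribute exactly the terms $-2m$ or $-2n$ that cancel $\rho_{ii}$. For instance $\rho^*_{11}=\rho_{11}-2m\cdot1=2m-2m=0$, $\rho^*_{22}=\rho_{22}-2n=0$, and $\rho^*_{12}=\rho_{12}-0=0$. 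I expect the only genuine obstacle to be bookkeeping in the $J$-invariance claim — one must be careful about whether a horizontal vector lies in $T\FF_1$ or $T\FF_2$ so that the right $\phi_i$ acts as an almost complex structure on it — but this is organizational rather than deep, and the rest is direct substitution.
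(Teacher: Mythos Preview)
Your proposal is correct and largely parallels the paper's proof: both derive the $J$-invariance of $\rho$ on $\mathcal{H}$ from Lemma \ref{Lemma-2.5} (the correction terms $(2m-1)g(\phi_1X,\phi_1Y)+(2n-1)g(\phi_2X,\phi_2Y)$ are $J$-invariant on horizontal vectors, and $\rho^*$ is $J$-invariant), and both read off the $\rho^*_{ij}$ from the $\rho_{ij}$ via the same lemma.

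The one genuine difference is how you obtain $\rho_{12}=0$. The paper argues first on the starred side: $J$-invariance of $\rho^*$ gives $\rho^*(Z,Z)=\rho^*(Z_2-Z_1,Z_2-Z_1)$, forcing $\rho^*_{12}=0$, and then Lemma \ref{Lemma-2.5} with $X=Z_1$, $Y=Z_2$ yields $\rho_{12}=0$; the equations $\rho(Z_1,Z)=2m$, $\rho(Z_2,Z)=2n$ then split immediately. You instead trace curvature directly using Lemma \ref{Lemma-2.4}. That works, but note that Lemma \ref{Lemma-2.4} is stated only for $X\in T\FF_2\cap\mathcal{H}$; to handle the basis vectors in $T\FF_1\cap\mathcal{H}$ you must invoke the obvious companion statement with the indices $1,2$ exchanged (same proof, by the symmetry of the structure). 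The paper's route is slightly slicker because it avoids Lemma \ref{Lemma-2.4} altogether for this step, but your direct computation is equally valid once that symmetric version is acknowledged.
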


\begin{proof}
From the symmetry of $\rho^*$ and the formula of Lemma \ref{Lemma-2.5}, the $J$-invariance of $\rho$ restricted to $\mathcal{H}$ is immediate. By the $J$-invariance of $\rho^*$ we have $\rho^*(Z,Z)=\rho^*(Z_2-Z_1,Z_2-Z_1)$ giving $\rho^*_{12}=0$.
The formula of Lemma \ref{Lemma-2.5} then gives $\rho_{12}=0$. We noted above that $\rho(Z_1,Z)=2m$ and 
$\rho(Z_2,Z)=2n$ and therefore $\rho_{11}=2m$ and $\rho_{22}=2n$. Finally Lemma \ref{Lemma-2.5} gives
$\rho^*_{11}=\rho^*_{22}=0$.
\end{proof}

\section{The Bochner Curvature Tensor}

In \cite{TV} F. Tricerri and L. Vanhecke gave a complete decomposition of the space of curvature tensors over a Hermitian vector space into irreducible factors under the action of the unitary group similar to the well known decompostion of curvature tensors in the Riemannian setting with the action of the orthogonal group.  In the real case the Weyl conformal curvature tensor emerges in a natural manner and correspondingly the Bochner tensor for an almost Hermitian manifold emerges as one factor of the decomposition.  The Bochner tensor in almost Hermitian geometry is considerably more complicated than that in K\"ahler geometry and as a result has not been studied extensively.

Define   $(0,4)$-tensors $\pi_1$, $\pi_2$ and $L_3 R$ by:
\begin{align*}
\pi_1(X,Y,Z,W)&=g(X,Z)g(Y,W)-g(Y,Z)g(X,W),\\
\pi_2(X,Y,Z,W)&=2g(JX,Y)g(JZ,W)+g(JX,Z)g(JY,W)-g(JY,Z)g(JX,W),\\
L_3 R(X,Y,Z,W)&=R(JX,JY,JZ,JW).
\end{align*}

Given a $(0,2)$-tensor $S$, we denote by $\varphi(S)$ and $\psi(S)$:
\begin{align*}
\varphi(S)(X,Y,Z,W)=&g(X,Z)S(Y,W)+g(Y,W)S(X,Z)\\
&-g(X,W)S(Y,Z)-g(Y,Z)S(X,W),\\
\psi(S)(X,Y,Z,W)=&2g(X,JY)S(Z,JW)+2g(Z,JW)S(X,JY)\\
&+g(X,JZ)S(Y,JW)+g(Y,JW)S(X,JZ)\\
&-g(X,JW)S(Y,JZ)-g(Y,JZ)S(X,JW).
\end{align*}

Taking into account our sign convention for the curvature tensor and complex dimension of our complex manifold, the normal metric contact pair $M$, the Bochner tensor of Tricerri and Vanhecke is given by the following.
Given a metric contact pair of complex dimension $m+n+1>2$, the {\it Bochner tensor} corresponding to the complex structure $J$ is defined as
\begin{align*}
B=&R+\frac{1}{4(m+n+2)}\psi(\rho^*)(R-L_3R)+\frac{1}{4(m+n)}\varphi(\rho)(R-L_3R)\\
&+\frac{1}{16(m+n+3)} (\varphi+\psi)(\rho+3\rho^*)(R+L_3R)\\
&+\frac{1}{16(m+n-1)} (3\varphi-\psi)(\rho-\rho^*)(R+L_3R)\\
&-\frac{\tau+3\tau^*}{16(m+n+2)(m+n+3)} (\pi_1+\pi_2)
-\frac{\tau-\tau^*}{16(m+n-1)(m+n)}(3\pi_1-\pi_2).
\end{align*}
We will work almost exclusively with this complex structure.  When discussing both $J$ and $T$ we will denote the corresponding Bochner tensors by $B_J$ and $B_T$ respectively.

When the complex dimension is $2$, the formula for the Bochner tensor is somewhat different and the formula given in \cite{TV} is in error; the correct formula is the following.
\begin{align*}
B=&R+\frac{1}{12}\psi(\rho^*)(R-L_3R)+\frac{1}{4}\varphi(\rho)(R-L_3R)\\
&+\frac{1}{64}(\varphi+\psi)(\rho+3\rho^*)(R+L_3R)
-\frac{\tau+3\tau^*}{192} (\pi_1+\pi_2)
+\frac{\tau-\tau^*}{32}(3\pi_1-\pi_2).
\end{align*}

\section{Main Results}

We now turn to our main result on Bochner-flatness and then discuss the question of conformal flatness as a corollary.

\begin{theorem}\label{Theorem-1}
Let $(M,\a_1,\a_2,\phi,g)$ be a normal metric contact pair with decomposable $\phi$.  If either of the two Bochner tensors, $B_J$ or $B_T$,  vanishes, then $M$ is locally isometric to the Hopf manifold $\bS^{2m+1}(1)\times \bS^1$.
\end{theorem}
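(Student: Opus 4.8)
The plan is to treat $B_J=0$; the case $B_T=0$ reduces to it because interchanging $\a_1$ and $\a_2$ gives a normal metric contact pair of type $(n,m)$ with the same associated metric whose $J$-structure is the original $T$, and its conclusion $\bS^{2n+1}(1)\times\bS^1$ is again a Hopf manifold. So assume $B_J=0$.

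First I would assemble the curvature data supplied by the preliminary lemmas on the distinguished vectors. By Lemma~\ref{Lemma-2.4} and its analogue under $1\leftrightarrow 2$, for unit $X\in T\FF_2\cap\HHH$ and unit $Y\in T\FF_1\cap\HHH$ one has $R(X,Z_1,Z_1,X)=1$, $R(X,Z_2,Z_2,X)=R(X,Z_1,Z_2,X)=0$, $R(Y,Z_2,Z_2,Y)=1$, $R(Y,Z_1,Z_1,Y)=R(Y,Z_1,Z_2,Y)=0$; Lemma~\ref{Lemma-2.3} adds $R_{Z_1Z_2}(Z_1+Z_2)=0$. By Lemma~\ref{Lemma-2.6}, $\rho_{11}=2m$, $\rho_{22}=2n$, $\rho_{12}=\rho^*_{11}=\rho^*_{22}=\rho^*_{12}=0$, with $\rho$ $J$-invariant on $\HHH$; by Lemma~\ref{Lemma-2.5}, $\rho^*(X,X)=\rho(X,X)-(2m-1)$, $\rho^*(Y,Y)=\rho(Y,Y)-(2n-1)$, and $\tau-\tau^*=4(m^2+n^2)$. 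Since $JZ_1=Z_2$, $JZ_2=-Z_1$ and $J$ preserves $T\FF_1\cap\HHH$ and $T\FF_2\cap\HHH$ by decomposability, each of $\pi_1,\pi_2,\varphi(\,\cdot\,),\psi(\,\cdot\,),L_3R$ in the Bochner tensor is explicitly computable on the quadruples built from $X,Y,Z_1,Z_2$.

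Next I would impose $B_J=0$ on $(X,Z_1,Z_1,X)$, $(X,Z_2,Z_2,X)$, $(Y,Z_1,Z_1,Y)$, $(Y,Z_2,Z_2,Y)$ and the holomorphic quadruple $(X,JX,JX,X)$, using the corrected rank-two formula when $m+n+1=2$. Each becomes a scalar identity in $m,n,\tau,\rho(X,X),\rho(Y,Y)$, and I expect the system to be inconsistent whenever a unit $Y\in T\FF_1\cap\HHH$ is available --- note that $L_3R$ evaluated on $(Y,Z_1,Z_1,Y)$ is $R(JY,Z_2,Z_2,JY)=1$, while on $(X,Z_1,Z_1,X)$ it is $0$ --- thereby forcing $n=0$, i.e. the leaves of $\FF_1$ to be one-dimensional. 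Working out this comparison (expanding all the tensor pieces of $B$ on the relevant quadruples and solving the resulting linear system) is, I expect, the main obstacle.

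Once $n=0$ we have $d\a_2=0$, and $\phi_2\equiv 0$ since the leaves of $\FF_1$ are one-dimensional, so Lemma~\ref{Lemma-2.2} gives $\nabla Z_2=0$: $Z_2$ is parallel and $M$ is locally a Riemannian product $N^{2m+1}\times\bR$, with $N$ a totally geodesic leaf of $\FF_2$, Sasakian with Reeb field $Z_1$ by normality, and $R^{N}$ the restriction of $R$. Putting $n=0$ and $\tau^*=\tau-4m^2$ back into the remaining Bochner equations forces first $\rho(X,X)$ to be independent of the unit $X\in\HHH$ (compatibility with $R(X,Z_1,Z_1,X)=1$), and then $R(X,JX,JX,X)=1$ for all such $X$; thus $N$ has constant $\phi$-sectional curvature $1$, hence is a Sasakian space form of constant curvature $1$ and is locally isometric to $\bS^{2m+1}(1)$. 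Therefore $M$ is locally isometric to $\bS^{2m+1}(1)\times\bR\cong\bS^{2m+1}(1)\times\bS^1$.
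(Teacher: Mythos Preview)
Your plan is essentially the paper's, but you have made the ``main obstacle'' harder than necessary by omitting two simplifications that the paper exploits.  First, the paper begins by evaluating $B(Z_1,Z_2,Z_2,Z_1)=0$; since $R(Z_1,Z_2,Z_2,Z_1)=0$ and every Ricci and $*$-Ricci entry on the Reeb fields is already known from Lemma~\ref{Lemma-2.6}, this single equation yields $\tau$ outright (equation~\eqref{eqn:(4.1)}).  You never evaluate $B$ on the Reeb quadruple, so $\tau$ floats through your system and must be recovered later by a trace argument you do not make explicit.  Second, you bring in $Y\in T\FF_1\cap\HHH$ and aim for an over-determined ``inconsistency,'' whereas the paper never needs $Y$ at all: with $m>0$ and unit $X\in T\FF_2\cap\HHH$, simply subtract $B(X,Z_2,Z_2,X)=0$ from $B(X,Z_1,Z_1,X)=0$.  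Every term built from $R+L_3R$, from $\pi_1\pm\pi_2$, and from $\rho(X,X)$, $\rho^*(X,X)$ appears identically in both equations and cancels; only the raw curvatures and the $\varphi(\rho)(R-L_3R)$ contribution survive, and the difference collapses to $\tfrac{2n}{m+n}=0$.  That is the whole content of the step you flag as the main obstacle.  With $n=0$ and $\tau$ known, $B(X,Z_1,Z_1,X)=0$ then gives $\rho(X,X)=2m$, and $B(X,\phi X,\phi X,X)=0$ gives $R(X,\phi X,\phi X,X)=1$, exactly as you anticipate.

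One further point on the low-dimensional endgame.  After obtaining constant $\phi$-sectional curvature $+1$, the passage to constant curvature $+1$ uses the Sasakian space-form classification in dimension $\ge 5$; the paper invokes this only when $m\ge 2$.  In complex dimension $2$ (so $m=1$, leaf $N^3$) the paper argues differently: from $B(Z_1,Z_2,Z_2,Z_1)=0$ and Lemma~\ref{Lemma-2.6} one gets $\tau=6$, hence $\rho(X,X)=2$ for every unit horizontal $X$; polarization gives $\rho(X,JX)=0$, and since $Z_1$ is a Ricci eigenvector on any Sasakian manifold one has $\rho(Z_1,X)=0$, so $N^3$ is Einstein and therefore of constant curvature $+1$.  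Your sketch folds this case into the general one; that can be made to work, but it needs the corrected dimension-$4$ Bochner formula and a separate check, which you should not leave implicit.
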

The proof will be given in three stages.  We will first prove the theorem for the Bochner tensor $B_J$ to be denoted simply by $B$ for complex dimension $m+n+1>2$ (Stage 1).  Then we will prove the theorem for the case of complex dimension 2 (Stage 2).  Finally we indicate the proof for the Bochner tensor $B_T$ (Stage 3).

\begin{proof}
Stage 1, Step 1:\\
We begin by evaluating $B$ on the Reeb vector fields, in particular we have $0=B(Z_1,Z_2,Z_2,Z_1)$.  We proceed term by term in the definition of $B$. That $R(Z_1,Z_2,Z_2,Z_1)=0$ is immediate since $\nabla_{Z_i}Z_j=0$.  The next term in the definition vanishes by virtue of $\rho^*$ being $J$-invariant (Lemma \ref{Lemma-2.5}).  In the third term the Ricci tensors in  $\varphi(\rho)(R-L_3R)$ all cancel. We separate the fourth term into two parts corresponding to the action of $(\varphi+\psi)$ on $\rho$ and on $3\rho^*$.  In the fifth term we separate into the parts corresponding to $3\varphi(\rho-\rho^*)$ and $-\psi(\rho-\rho^*)$ and these contribution cancel each other. Recalling that $\tau-\tau^*=4(m^2+n^2)$  the sixth and seventh terms are easily evaluated.  Thus we have
$$
0=B(Z_1,Z_2,Z_2,Z_1)=\frac{1}{16(m+n+3)}(-16m-16n)+\frac{\tau-3(m^2+n^2)}{(m+n+2)(m+n+3)} \, .
$$Solving for $\tau$ we obtain

\begin{equation}\label{eqn:(4.1)}
\tau=2m(2m+1)+2n(2n+1)+2mn.
\end{equation}

\vskip6pt
\noindent
Stage 1, Step 2:\\
Since metric contact pairs of type (0,0) are not of interest, we suppose that $m>0$.  Then we may choose the unit vector $X$ in 
$T{\FF}_2\cap{\mathcal{H}}$.
Calculating as above using the results of Lemmas \ref{Lemma-2.4} and \ref{Lemma-2.6} we have the following.
\begin{align*}
0=&B(X,Z_1,Z_1,X)=1+\frac{1}{4(m+n)}(-(2m-2n))\\
+&\frac{1}{16(m+n+3)}(-(2m+2n)-2\rho(X,X))+\frac{3}{16(m+n+3)}(-2\rho^*(X,X))\\
+&\frac{3}{16(m+n-1)}(-(2m+2n)-2\rho(X,X)+2\rho^*(X,X))\\ 
+&\frac{\tau-3(m^2+n^2)}{4(m+n+2)(m+n+3)}+\frac{3(m^2+n^2)}{4(m+n-1)(m+n)}\\
=&1-\frac{m-n}{2(m+n)}+{\rm the}\;{\rm additional}\;{\rm terms}. \numberthis \label{eqn:(4.2)}
\end{align*}
Similarly
\begin{align*}
0=&B(X,Z_2,Z_2,X)\\
=&\frac{1}{4(m+n)}(-(2n-2m))+{\rm the}\;{\rm same}\;{\rm additional}\;{\rm terms}.
\end{align*}
Subtracting these two equations we have
$$0=1-\frac{m-n}{m+n}=\frac{2n}{m+n}$$
giving $n=0$ and, from \eqref{eqn:(4.1)}, $\tau=2m(2m+1)$.

\vskip6pt
\noindent
Stage 1, Step 3:\\
 Returning to equation \eqref{eqn:(4.2)} with $n=0$ and using Lemma \ref {Lemma-2.5} we have
\begin{align*}
0=&B(X,Z_1,Z_1,X)=\frac{1}{2}
+\frac{1}{16(m+3)}(-2m-2\rho(X,X))\\
&+\frac{3}{16(m+3)}(-2\rho(X,X)+4m-2)
+\frac{3}{16(m-1)}(-6m+2)\\
&+\frac{\tau-3m^2}{(m+2)(m+3)}+\frac{3m^2}{4m(m-1)}.
\end{align*}
With $\tau=2m(2m+1)$ we may solve for $\rho(X,X)$  and we have
$$\rho(X,X)=2m\; {\rm and}\;\rho^*(X,X)=1.$$

\vskip6pt
\noindent
Stage 1, Step 4:\\
The vector field $Z_2$ restricted to a leaf of $\mathcal{F}_2$ is the normal to the leaf as a submanifold. By Lemma \ref{Lemma-2.2}, $Z_2$ is parallel along the submanifold giving that the leaves are totally geodesic submanifolds. Moreover its own integral curves on $M$ are geodesics.  Therefore $M$ is a local Riemannian product.

Finally with $X$ as above, we compute $0=B(X,\phi X,\phi X,X)$ obtaining
$$0=B(X,\phi X,\phi X,X)=R(X,\phi X,\phi X,X)-1.$$
Therefore the leaves of ${\FF}_2$ are Sasakian manifolds of constant $\phi$-sectional curvature +1.  With $n=0$ and $m+n+1>2$, the leaves are of dimension $\geq 5$; but a Sasakian manifold of constant $\phi$-sectional curvature +1 and dimension $\geq 5$ must be of constant curvature +1 (see e.g. \cite[p. $139$]{Blair}).

As a result the universal cover of $M$ is $\bS^{2m+1}(1)\times{\mathbb R}$, completing Stage 1 of the proof.

\vskip6pt
\noindent
Stage 2:\\
  The proof of the theorem in dimension 4 is very similar but using the formula for the Bochner tensor in this dimension.  Since $m+n+1=2$, one of $m$ or $n$ vanishes and we take $n=0$ at the outset.  The computation of $0=B(Z_1,Z_2,Z_2,Z_1)$ is straightforward and gives, as in the derivation of equation \eqref{eqn:(4.1)},
$$\rho_{11}+\rho_{22}=1+\frac{\tau}{6}.$$
From Lemma \ref{Lemma-2.6}, $\rho_{11}=2$ and $\rho_{22}=0$, therefore $\tau=6$. We also have for $X$ unit and horizontal
$\rho(X,X)=\rho(JX,JX)$ and we then obtain $\rho(X,X)=2$.  Since $X$ was arbitrary as a unit horizontal vector we also have
$\rho(\frac{X+JX}{\sqrt 2},\frac{X+JX}{\sqrt 2})=2$ and hence
$\rho(X,JX)=0$.

As in Stage 1, Step 4, $M$ is  locally the Riemannian product of
a Sasakian manifold $N^3$ of ${\mathbb R}$.  Since $Z_1$ is the Reeb vector field of a Sasakian manifold, $Z_1$ is pointwise an eigenvector of the Ricci operator (see e.g. \cite[p. 113]{Blair}), which gives us $\rho(Z_1,X)=\rho(Z_1,JX)=0$. Therefore, with $\rho(X,JX)=0$ and 
$\rho_{11}=\rho(X,X)=\rho(JX,JX)=2$ we have that $N^3$ is Einstein and in turn of constant curvature +1.

\vskip6pt
\noindent
Stage 3:\\
 Recalling that $J=\phi -\a_2\otimes Z_1+\a_1\otimes Z_2$ and 
$T=\phi +\a_2\otimes Z_1-\a_1\otimes Z_2$,  note that on horizontal vectors these are the same and that they act with opposite signs on vertical vectors.  In particular this means that interchanging the roles of the type numbers $m$ and $n$, we have that the Bochner tensors $B_J$ and $B_T$ are interchanged.  Thus to show that $B_T=0$ implies that $M$ is locally isometric to the Hopf manifold, it is enough to return to Stage 1, Step 2
and work through the rest of the proof starting with $n>0$ instead of $m>0$ and $X\in T{\FF}_1\cap{\mathcal{H}}$.  This proceeds in the same way giving $m=0$, etc.  In the $4$-dimensional case, work with $m=0$ to begin with.
\end{proof}

\begin{remark}
Observe that a priori the two Bochner tensors are not the same.  For example on a, not necessarily Bochner-flat, normal metric contact pair $B_T(X,JX,Z_1,Z_2)$ is the negative of $B_J(X,JX,Z_1,Z_2)$ for a 
horizontal $X$.
\end{remark}

In the Stage $2$ of the proof of Theorem \ref{Theorem-1}, we only use the normality condition and the assumption $B(Z_1, Z_2,Z_2, Z_1)=0$. Hence:
\begin{theorem}
A normal metric contact pair of dimension $4$ on which either $B_J(Z_1, Z_2,Z_2, Z_1)$ or $B_T(Z_1, Z_2,Z_2, Z_1)$ vanishes, is locally isometric to the Hopf manifold $\bS^3 (1) \times \bS^1$.
\end{theorem}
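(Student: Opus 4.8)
The plan is to extract from the vanishing of the single curvature component $B(Z_1,Z_2,Z_2,Z_1)$ enough information to force the $4$-manifold to be, locally, a Riemannian product of a constant-curvature-one Sasakian $3$-manifold with $\bR$, which is exactly the local model of $\bS^3(1)\times\bS^1$. First I would note that since the dimension is $4$ we have $m+n+1=2$, so one of the type numbers vanishes; say $n=0$ (the case $m=0$ being handled at the end by the interchange of $J$ and $T$ discussed in Stage~3, or directly by symmetry). Then I would evaluate each of the six terms of the complex-dimension-$2$ Bochner formula on $(Z_1,Z_2,Z_2,Z_1)$, exactly as in Stage~2 of the proof of Theorem~\ref{Theorem-1}: the term $R(Z_1,Z_2,Z_2,Z_1)$ vanishes because $\nabla_{Z_i}Z_j=0$; the $\psi(\rho^*)$ term vanishes by the $J$-invariance of $\rho^*$ (Lemma~\ref{Lemma-2.5}); the $\varphi(\rho)$ term has its Ricci contributions cancel; and the remaining $(\varphi+\psi)(\rho+3\rho^*)$, $(\pi_1+\pi_2)$ and $(3\pi_1-\pi_2)$ terms are evaluated using $\tau-\tau^*=4(m^2+n^2)=4$ (Lemma~\ref{Lemma-2.5}). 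Collecting terms yields the relation $\rho_{11}+\rho_{22}=1+\tfrac{\tau}{6}$.

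Next I would invoke Lemma~\ref{Lemma-2.6}, which is valid on any normal metric contact pair with decomposable $\phi$ and gives $\rho_{11}=2m=2$, $\rho_{22}=2n=0$; substituting into the relation above forces $\tau=6$. Then, since $\rho$ is $J$-invariant on horizontal vectors (again Lemma~\ref{Lemma-2.6}), writing out $\tau=\rho_{11}+\rho_{22}+\rho(X,X)+\rho(JX,JX)=2+2\rho(X,X)$ for a unit horizontal $X$ gives $\rho(X,X)=2$, independent of $X$; applying this to $\tfrac{X+JX}{\sqrt2}$ and expanding gives $\rho(X,JX)=0$. At this point the argument of Stage~1, Step~4 applies verbatim: $Z_2$ restricted to a leaf of $\mathcal F_2$ is the unit normal, and by Lemma~\ref{Lemma-2.2} it is parallel along the leaf and its own integral curves are geodesics, so $M$ is locally a Riemannian product $N^3\times\bR$ with $N^3$ a leaf of $\mathcal F_2$, which carries the induced Sasakian structure.

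Finally I would upgrade $N^3$ to a space of constant curvature $+1$. Because $N^3$ is Sasakian with Reeb field $Z_1$, the vector $Z_1$ is pointwise an eigenvector of its Ricci operator (standard for Sasakian manifolds), so $\rho(Z_1,X)=\rho(Z_1,JX)=0$ for horizontal $X$; combined with $\rho(X,JX)=0$ and $\rho(X,X)=\rho(JX,JX)=2=\rho_{11}$, the Ricci operator of $N^3$ is a multiple of the identity, i.e.\ $N^3$ is Einstein. A three-dimensional Einstein manifold has constant sectional curvature, and the normalization $\rho_{11}=2$ pins that constant to $+1$. Hence the universal cover of $M$ is $\bS^3(1)\times\bR$, so $M$ is locally isometric to the Hopf manifold $\bS^3(1)\times\bS^1$. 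The only real work is the bookkeeping in the first paragraph---carefully evaluating the modified dimension-$2$ Bochner formula on $(Z_1,Z_2,Z_2,Z_1)$---and the main subtlety to watch is that \emph{none} of the intermediate steps uses Bochner-flatness anywhere except at this one component, which is precisely what makes the weaker hypothesis suffice; everything after the product splitting is classical Sasakian $3$-manifold geometry.
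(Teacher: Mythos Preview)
Your proposal is correct and follows essentially the same approach as the paper: the paper's proof is precisely the observation that Stage~2 of Theorem~\ref{Theorem-1} uses only normality together with $B(Z_1,Z_2,Z_2,Z_1)=0$, and you have faithfully reproduced that Stage~2 argument (deriving $\rho_{11}+\rho_{22}=1+\tau/6$, hence $\tau=6$, then $\rho(X,X)=2$ and $\rho(X,JX)=0$, then the local product splitting via Lemma~\ref{Lemma-2.2}, and finally the Einstein-hence-constant-curvature conclusion for the Sasakian leaf). Your explicit remark that no other Bochner component is ever invoked is exactly the point the paper is making in stating this as a separate theorem.
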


Turning to the conformally flat question, we have the following theorem as a corollary of our Theorem \ref{Theorem-1}.

\begin{theorem}\label{Theorem-2}
Let $(M,\a_1,\a_2,\phi,g)$ be a normal metric contact pair with decomposable $\phi$.  If $M$ is conformally flat, then it is locally isometric to the Hopf manifold $\bS^{2n+1} (1) \times \bS^1$. 
\end{theorem}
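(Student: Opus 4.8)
The plan is to deduce Theorem~\ref{Theorem-2} from Theorem~\ref{Theorem-1} by showing that, for a normal metric contact pair with decomposable $\phi$, conformal flatness forces the Bochner tensor $B_J$ (or $B_T$) to vanish. The conceptual point is that conformal flatness means the Weyl tensor $W$ vanishes, hence $R$ is completely determined by the Ricci tensor $\rho$ and scalar curvature $\tau$ via the standard Schouten/Weyl decomposition; and the Bochner tensor $B$ is built from $R$, $L_3R$, $\rho$, $\rho^*$, $\tau$, $\tau^*$ by the formula in Section~3. So the first step is to substitute the conformally flat form of $R$ into that formula and check that everything collapses to zero. This is in principle a direct computation, but it is cleaner to organize it representation-theoretically: in the Tricerri--Vanhecke decomposition of the space of curvature tensors over a Hermitian vector space, the ``Riemannian'' Weyl component and the Bochner component live in complementary $U(n)$-summands, and a conformally flat $R$ has vanishing Riemannian Weyl part. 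One then only needs to know how the purely ``Ricci-type'' part of $R$ (the part reconstructed from $\rho$ and $\tau$ in the flat-Weyl case) projects onto the Bochner summand, and to verify that this projection is zero provided $\rho$ and $\rho^*$ satisfy the algebraic relation of Lemma~\ref{Lemma-2.5}.

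Concretely I would proceed as follows. First, recall that conformal flatness gives
\[
R = \frac{1}{m+n-1}\,\varphi\!\left(\rho - \frac{\tau}{2(m+n+1)}\,g\right) \ \text{(up to normalization)},
\]
i.e.\ $R$ is a multiple of $\varphi$ applied to the Schouten tensor, since $\dim_{\mathbb R}M = 2(m+n+1)$. Note also that, because $J$ is an isometry, $L_3R$ is obtained by the same formula with $\rho$ replaced by its $J$-transform $\rho\circ(J\times J)$; and by Lemma~\ref{Lemma-2.6} together with Lemma~\ref{Lemma-2.5} the Ricci tensor splits neatly on $\mathcal{H}\oplus\Span\{Z_1\}\oplus\Span\{Z_2\}$, with known eigenvalues $\rho_{11}=2m$, $\rho_{22}=2n$ and $J$-invariance on $\mathcal H$. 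Second, I would plug this expression for $R$ (and for $L_3R$, $\rho^*$, $\tau$, $\tau^*$) into the Section~3 formula for $B$ and collect terms; the expectation, which is exactly the assertion, is that the result is identically $0$. Third — and this is the only genuinely delicate part — rather than expanding all seven terms by brute force, I would instead invoke Theorem~\ref{Theorem-1}: it suffices to show $B_J=0$, and for that it is enough to verify the specific component identities used in its proof, namely $B(Z_1,Z_2,Z_2,Z_1)=0$, $B(X,Z_1,Z_1,X)=0$, $B(X,Z_2,Z_2,X)=0$ and $B(X,\phi X,\phi X,X)=0$ for unit horizontal $X$. Each of these is a finite computation once $R$ is expressed through $\rho$ and $\tau$, so in practice one reduces to checking four scalar equations.

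The main obstacle I anticipate is bookkeeping rather than ideas: the operators $\varphi$, $\psi$, $\pi_1$, $\pi_2$, $L_3$ interact with the conformally flat ansatz and with the splitting of $\rho$ in a way that produces many terms, and one must be careful that the coefficients in the (dimension $>2$) Bochner formula — with their denominators $4(m+n+2)$, $4(m+n)$, $16(m+n+3)$, $16(m+n-1)$ — combine correctly; the dimension~$4$ case again needs the separate formula from Section~3. A secondary subtlety: Theorem~\ref{Theorem-1} is stated for complex dimension $m+n+1>2$ and, separately, $=2$, and the conclusion there is $\bS^{2m+1}(1)\times\bS^1$ with $n=0$, whereas Theorem~\ref{Theorem-2} writes $\bS^{2n+1}(1)\times\bS^1$ — so I would note that conformal flatness is symmetric in $J$ and $T$ (hence in $m$ and $n$, by Stage~3), which is precisely why either index may be taken to be the vanishing one and the two statements agree. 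Once $B=0$ is established, Theorem~\ref{Theorem-1} applies verbatim and yields the local isometry with the Hopf manifold.
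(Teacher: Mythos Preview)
Your overall strategy---deduce Bochner-flatness from conformal flatness and then apply Theorem~\ref{Theorem-1}---is exactly the paper's, but you are missing the one fact that turns the argument into a single line. The paper does not substitute the Schouten expression for $R$ into the Bochner formula at all. Instead it invokes Tricerri and Vanhecke \cite[Theorem~11.1]{TV}: the Bochner tensor of an almost Hermitian manifold is a \emph{conformal invariant}. A conformally flat $(M,g,J)$ is locally conformal to the flat Hermitian metric on $\mathbb{C}^{n+1}$, which is trivially Bochner-flat; conformal invariance then gives $B=0$ on $(M,g,J)$ immediately, and Theorem~\ref{Theorem-1} finishes. No computation with $\varphi$, $\psi$, $\pi_i$, or $L_3$ is needed, and the special structure of normal metric contact pairs (Lemmas~\ref{Lemma-2.5}, \ref{Lemma-2.6}) plays no role in this step---the implication ``conformally flat $\Rightarrow$ Bochner-flat'' holds for \emph{any} almost Hermitian manifold. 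Your representation-theoretic remark that the Ricci-type part of $R$ should project to zero in the Bochner summand is precisely a restatement of this conformal invariance, so you were circling the right idea without naming it.

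Your proposed direct route is not wrong in principle, but it is much harder than necessary, and one step is logically muddled. When you write ``it suffices to show $B_J=0$, and for that it is enough to verify the specific component identities used in its proof'', you are conflating two things: invoking Theorem~\ref{Theorem-1} as a black box requires the \emph{full} vanishing of $B_J$, not just four evaluations. What you really mean is that the \emph{proof} of Theorem~\ref{Theorem-1} only uses those four evaluations, so one could rerun that argument under the weaker hypothesis; that is legitimate, but it is not ``invoking Theorem~\ref{Theorem-1}''. And even granting that reformulation, you would still have to carry out each of those component computations from the Schouten ansatz---exactly the bookkeeping you flag as the main obstacle. The conformal-invariance argument bypasses all of it.
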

\begin{proof}  As a complex manifold, if a normal metric contact pair is conformally flat, it is locally conformal to
${\mathbb C}^{n+1}$ and ${\mathbb C}^{n+1}$ is Bochner-flat.  Since the Bochner tensor is a conformal invariant \cite[Theorem $11.1$]{TV} the manifold is Bochner-flat and hence locally isometric to the Hopf manifold by Theorem \ref{Theorem-1}.
\end{proof}
\vspace{1cm}
We end the paper with an application to Vaisman manifolds. Recall that on a Vaisman manifold the Lee form is parallel, then it has constant length $c$. The manifold is non-K\"ahler if and only if $c\neq 0$ and after a constant rescaling of the metric we can achieve that $c=1$. In \cite{BK} it was shown that, up to constant rescaling of the metric, there is a bijection between non-K\"ahler Vaisman manifolds and normal metric contact pairs of type $(n,0)$ (the latter corresponding to $c=1$).

Therefore we can apply our previous results to give a classification of locally conformally flat and Bochner-flat non-K\"ahler Vaisman manifolds:
\begin{corollary}
Let M be a $(2n+2)$-dimensional non-K\"ahler Vaisman manifold.  If M is either Bochner-flat or locally conformally flat then, after a rescaling of the metric by the inverse of the length of the Lee form, it is locally isometric to the Hopf manifold $\bS^{2n+1} (1) \times \bS^1$.
\end{corollary}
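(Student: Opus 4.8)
The plan is to deduce this corollary directly from Theorems \ref{Theorem-1} and \ref{Theorem-2} together with the Vaisman--contact-pair dictionary of \cite{BK}. First I would recall that on a non-K\"ahler Vaisman manifold the Lee form is parallel and nowhere vanishing, hence of constant length $c>0$; multiplying the metric by the constant described in the statement yields a Vaisman metric whose Lee form has unit length. This rescaling is a homothety, hence a (trivial) conformal change, so it preserves local conformal flatness, and it preserves Bochner-flatness as well since the Bochner tensor is a conformal invariant \cite[Theorem $11.1$]{TV}. Thus it suffices to treat a non-K\"ahler Vaisman manifold $(M,g)$ already normalized so that $c=1$.

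Next I would invoke \cite{BK}: after this normalization, $(M,g)$ is precisely a normal metric contact pair of type $(n,0)$, and the endomorphism $\phi$ of the associated contact pair structure is decomposable. Indeed, its characteristic foliations are the one-dimensional foliation generated by the Lee direction together with its $g$-orthogonal complement, so the characteristic foliations are orthogonal, which by the equivalence recalled in Section $2$ is exactly decomposability of $\phi$. With these hypotheses in place, Theorem \ref{Theorem-1} applies when $M$ is Bochner-flat and Theorem \ref{Theorem-2} applies when $M$ is locally conformally flat. In the notation of those theorems the first type number is $m=n$ and the second is $0$, so the conclusion reads that $M$ is locally isometric to the Hopf manifold $\bS^{2n+1}(1)\times\bS^1$, as claimed.

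I do not anticipate a genuine obstacle: the corollary is essentially a translation of the two main theorems through the correspondence of \cite{BK}. The only points needing a line of justification are (i) that the constant rescaling of the metric destroys neither the Bochner-flat nor the conformally flat hypothesis, which is immediate from conformal invariance, and (ii) that the metric contact pair produced by \cite{BK} has decomposable $\phi$, equivalently orthogonal characteristic foliations, which is exactly the standing hypothesis under which Theorems \ref{Theorem-1} and \ref{Theorem-2} are established; this is part of the construction in \cite{BK}, and the dimension count $2n+2 = (2n+1)+1$ confirms the matching of type numbers.
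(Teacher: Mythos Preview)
Your proposal is correct and follows essentially the same route as the paper: normalize the Lee form to unit length, invoke the correspondence of \cite{BK} to view the manifold as a normal metric contact pair of type $(n,0)$, and then apply Theorems \ref{Theorem-1} and \ref{Theorem-2}. You supply more detail than the paper does---in particular the conformal invariance of the Bochner tensor under the homothety and the automatic orthogonality of the characteristic foliations in type $(n,0)$---but the argument is the same.
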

For the locally conformally flat case, the result was already proven by Vaisman \cite[Theorem $3.8$]{V}. The Bochner-flat case was proven by Kashiwada \cite{Ka} which showed an equivalence between locally conformally flatness and Bochner-flatness for Vaisman manifolds.

\end{document}